\newtheorem{proposition}{Proposition}
\theoremstyle{definition}
\newtheorem{definition}{Definition}
\newtheorem{remark}{Remark}
\newcommand{\ES}{\mathrm{ES}}
\title{Notes on the 33-point Erd\H{o}s--Szekeres problem}
\author{Dumitru Bogdan
\thanks{Faculty of Computer Science, University of Bucharest, bogdan.dumitru@fmi.unibuc.ro}}
\date{December, 2025}
\begin{document}
\maketitle

\begin{abstract}
The determination of $\ES(7)$ is the first open case of the planar Erd\H{o}s--Szekeres problem, where the general conjecture predicts $\ES(7)=33$.
We present a SAT encoding for the 33-point case based on triple-orientation variables and a 4-set convexity criterion for excluding convex $7$-gons, together with convex-layer anchoring constraints.
The framework yields UNSAT certificates for a collection of anchored subfamilies. We also report pronounced runtime variability across configurations, including heavy-tailed behavior that currently dominates the computational effort and motivates further encoding refinements.
\end{abstract}

\noindent\textbf{Keywords:} Erd\H{o}s--Szekeres problem, SAT solving, discrete geometry, convex layers, order types, automated reasoning.

\section{Introduction}

In their seminal 1935 paper, Erd\H{o}s and Szekeres investigated the smallest integer $\ES(k)$ such that any set of $\ES(k)$ points in the plane in general position (no three collinear) contains $k$ points in convex position~\cite{Erdos1935}. They conjectured the exact formula
\[
\ES(k) \;=\; 2^{k-2}+1 \qquad (k\ge 3).
\]
This formula is verified for $k\le 6$. Moreover, the classical Erd\H{o}s--Szekeres construction gives the matching lower bound $\ES(k)\ge 2^{k-2}+1$~\cite{Erdos1935}. The case $k=7$ remains open: it is not known whether every set of $33$ points in general position contains a convex $7$-gon, i.e., whether $\ES(7)=33$.

Upper bounds for $\ES(k)$ have been refined substantially; for instance Suk proved $\ES(k)\le 2^{k+o(k)}$~\cite{Suk16}, and Holmsen--Mojarrad--Pach--Tardos improved this to $\ES(k)\le 2^{k+O(\sqrt{k\log k})}$~\cite{holmsen2020extensions}. Computational approaches have also played an important role. Szekeres and Peters gave a computer-assisted proof that $\ES(6)=17$~\cite{Szekeres2006}, and SAT-based approaches have been explored in related settings (e.g.\ \cite{Balko2017,Mari2017}). Recent years have seen renewed SAT activity on Erd\H{o}s--Szekeres-type problems: Scheucher developed SAT models for higher-dimensional Erd\H{o}s--Szekeres parameters and related questions, verifying UNSAT results via proof certificates~\cite{Scheucher2023}, and Heule and Scheucher established the exact empty hexagon number $h(6)=30$ using a compact SAT encoding together with large-scale search and proof production~\cite{HeuleScheucher2024}. While empty-hole problems and higher-dimensional variants differ from the classical (non-empty) planar $\ES(7)$ case, these works illustrate the current reach of SAT-based methods and motivate further exploration of encodings tailored to the 33-point problem.

In this work we investigate the $n=33$ case computationally by encoding geometric consistency and the exclusion of convex $7$-gons as a Boolean satisfiability problem. Our encoding is built around orientation variables on triples and an exact reduction of convex position to constraints on 4-point subsets. To reduce symmetry and support systematic experimentation, we additionally impose convex-layer (hull-template) anchoring constraints and, for some hard templates, a simple sub-cubing parameter that further fixes the relative alignment of consecutive layers. Using this framework we obtain UNSAT certificates for a growing collection of anchored subfamilies, and we report a pronounced runtime imbalance across configurations, with some subproblems taking weeks on commodity hardware.

The paper is organized as follows: Section~2 introduces the triple-orientation variables and a family of 5-point CC-style implications (after Knuth).
Section~3 states the 4-set criterion for convex position and shows how it yields a compact ``no convex 7-set'' constraint.
Section~4 describes convex-layer anchoring templates and a simple sub-cubing parameter.
Sections~5--6 summarize instance sizes and representative computational results.
Section~7 discusses bottlenecks and outlines future work.

\section{Preliminaries: triple orientations and CC-style constraints}

\subsection{Orientation variables}

Let $[n]=\{0,1,\dots,n-1\}$ be labels for points. In a realizable point set in general position, every ordered triple $(a,b,c)$ has a well-defined orientation (clockwise vs.\ counterclockwise). We represent this with a sign predicate
\[
\chi(a,b,c)\in\{+,-\},
\]
alternating under swapping arguments.

\begin{definition}[3-cup / 3-cap (convention)]
Fix a convention: $\chi(a,b,c)=+$ corresponds to a \textbf{3-cup} and $\chi(a,b,c)=-$ corresponds to a \textbf{3-cap} (or vice versa). The convention is arbitrary but must be used consistently.
\end{definition}

We introduce one Boolean variable for each \emph{unordered} triple $\{i,j,k\}$ with $i<j<k$:
\[
x_{ijk}\in\{0,1\}.
\]

For any ordered triple $(a,b,c)$, let $(i,j,k)$ be the sorted triple and define a signed literal
\[
\ell(a,b,c)\in\{\pm x_{ijk}\}
\]
depending on whether $(a,b,c)$ is an even or odd permutation of $(i,j,k)$. This compactly enforces antisymmetry at the literal level.

\subsection{A 5-point implication (CC axiom 5)}

To restrict assignments toward realizable order types, one can enforce axioms of oriented matroids / chirotopes. We use a family of 5-point implications in the spirit of Knuth's CC axioms~\cite{Knuth1992-ae}. In one common form, for distinct points $p_1,\dots,p_5$:
\[
\chi(p_1,p_2,p_3)=\chi(p_1,p_2,p_4)=\chi(p_1,p_2,p_5)=\chi(p_1,p_3,p_4)=\chi(p_1,p_4,p_5)=+
\;\Longrightarrow\;
\chi(p_1,p_3,p_5)=+.
\]
Translated to CNF, this becomes a clause of the form
\[
\neg A_1 \vee \neg A_2 \vee \neg A_3 \vee \neg A_4 \vee \neg A_5 \vee C,
\]
where each $A_i$ and $C$ are signed literals of triple-orientation variables.

\paragraph{Reduced generation.}
In the implementation documented here, we generate a reduced subset of these 5-point clauses (motivated by symmetry and redundancy). This is a \emph{relaxation} used for performance.

\paragraph{Interpretation of solver outcomes.}
Omitting clauses can only enlarge the set of admissible Boolean assignments. Consequently, an \textbf{UNSAT} result for the relaxed instance remains a valid certificate for any stronger formulation that includes the omitted clauses.
Throughout this paper we report only UNSAT outcomes.

\section{A 4-set criterion for convex position}

A direct encoding of ``no convex $7$-gon'' tends to quantify over all 7-subsets and many cyclic orderings, creating a large clause blowup at $(n,k)=(33,7)$. We instead use an exact reduction to 4-point subsets.

\begin{proposition}[4-set criterion for convex position]\label{prop:4criterion}
Let $S$ be a finite set of points in the plane in general position.
Then $S$ is in convex position if and only if every 4-point subset of $S$ is in convex position.
\end{proposition}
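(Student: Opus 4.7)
The plan is to prove the two implications separately, with the forward direction essentially immediate from the definition of convex position, and the reverse direction reduced to a single application of Carathéodory's theorem in the plane.

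For the forward direction ($\Rightarrow$), assuming $S$ is in convex position, I would fix an arbitrary $4$-subset $T\subseteq S$ and any $p\in T$. Convex position of $S$ means $p\notin\operatorname{conv}(S\setminus\{p\})$, and since $\operatorname{conv}(T\setminus\{p\})\subseteq\operatorname{conv}(S\setminus\{p\})$, this immediately gives $p\notin\operatorname{conv}(T\setminus\{p\})$. Thus every point of $T$ is a vertex of $\operatorname{conv}(T)$, i.e., $T$ is in convex position.

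For the reverse direction ($\Leftarrow$) I would argue by contrapositive. Suppose $S$ is not in convex position; then some $p\in S$ lies in $\operatorname{conv}(S\setminus\{p\})$. Carathéodory's theorem in $\mathbb{R}^2$ then guarantees that $p$ already lies in the convex hull of at most three points of $S\setminus\{p\}$. The general-position hypothesis (points distinct, no three collinear) rules out the degenerate cases in which $p$ coincides with one of these points or lies on a segment between two of them, so $p$ must lie strictly in the interior of a triangle $\triangle abc$ with $a,b,c\in S\setminus\{p\}$. The $4$-subset $\{p,a,b,c\}\subseteq S$ then fails to be in convex position, contradicting the hypothesis and completing the proof.

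The only step requiring any care is the use of general position to eliminate the boundary outcomes of Carathéodory; without it one would have to argue separately that a point on a segment between two others can still be witnessed by a non-convex $4$-set, rather than by a $3$-set. In the setting of this paper this is free, so the proof is short. The payoff is exactly what Section~3 needs: to certify that $S$ is not in convex position it suffices to exhibit a single $4$-subset of $S$ in non-convex position, and this local condition is what permits the compact ``no convex $7$-set'' encoding over $4$-subsets rather than over all $7$-subsets and their cyclic orderings.
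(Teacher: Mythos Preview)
Your proof is correct and follows essentially the same line as the paper's: the forward direction is the trivial ``subsets of convex sets are convex'' observation, and the reverse direction is the contrapositive that locates a triangle $\triangle abc\subseteq S$ containing the non-extreme point $p$. The only cosmetic difference is that the paper obtains this triangle by an explicit fan triangulation of $\operatorname{conv}(S)$, whereas you invoke Carath\'eodory's theorem directly and then use general position to rule out the lower-dimensional cases---the two arguments are interchangeable here.
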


\begin{proof}[Proof sketch]
If $S$ is in convex position, then every subset of $S$ is also in convex position, so every 4-point subset is convex.

Conversely, assume $S$ is \emph{not} in convex position. Then some point $p\in S$ is not a vertex of the convex hull of $S$, hence $p$ lies strictly inside the convex polygon $P=\mathrm{conv}(V)$, where $V\subseteq S$ is the set of hull vertices.
Triangulate $P$ by fixing a hull vertex $v_0\in V$ and drawing diagonals from $v_0$ to all non-adjacent hull vertices; this partitions $P$ into triangles whose vertices lie in $V$.
Since $p$ lies inside $P$, it lies inside one of these triangles, say $\triangle abc$ with $a,b,c\in V\subseteq S$.
Then the 4-set $\{a,b,c,p\}$ is not in convex position (one point lies inside the triangle formed by the other three), contradicting the assumption that every 4-point subset of $S$ is convex.
\end{proof}

\section{Encoding 4-point types and excluding convex 7-sets}

\subsection{Four cyclic triple orientations}

Fix a 4-set $\{a,b,c,d\}$ together with an ordering $(a,b,c,d)$. Consider the cyclic triples
\[
(a,b,c),\quad (b,c,d),\quad (c,d,a),\quad (d,a,b).
\]
Each triple has an orientation literal, hence the 4-set induces a length-4 sign pattern in $\{+,-\}^4$.

Not all $2^4=16$ patterns occur for 4 points in general position; exactly 14 do. In our current implementation, we introduce 14 selector variables per 4-set, one per realizable pattern, and constrain them to be mutually covering.

\subsection{Selector variables and CNF}

Formally, for each 4-set $Q$ and each realizable pattern index $p\in\{1,\dots,14\}$ we introduce a variable $t_{Q,p}$; we suppress the $Q$ subscript when discussing a fixed 4-set.

For each 4-set we introduce 14 selector variables $t_1,\dots,t_{14}$. Each selector is constrained to be equivalent to a conjunction of four literals (the cyclic triple orientations), using the standard reification template
\[
t \leftrightarrow (L_1\wedge L_2\wedge L_3\wedge L_4),
\]
encoded as 5 clauses. We also enforce that at least one selector holds. Thus each 4-set contributes $14\times 5+1 = 71$ clauses.

\begin{remark}[Convex vs.\ non-convex patterns]
In the generator version used here, 6 realizable patterns are treated as convex:
\[
++++,\;\; ----,\;\; ++--,\;\; --++,\;\; -++-,\;\; +--+,
\]
and the remaining 8 realizable patterns are treated as non-convex.
\end{remark}

\subsection{No convex 7-set clauses}

For each 7-set $K$, Proposition~\ref{prop:4criterion} implies that $K$ is in convex position if and only if all its 4-subsets are convex. Therefore, to exclude convex 7-sets it suffices to enforce that every 7-set contains at least one non-convex 4-subset. In our encoding, for each 7-set $K$ we add one clause
\[
\bigvee_{Q\in \binom{K}{4}}\;\bigvee_{p\in \mathcal{N}} t_{Q,p},
\]
where $\mathcal{N}$ indexes the non-convex patterns for a 4-set $Q$. This clause has length $35\times 8=280$.

\section{Convex layers (hull templates) and a simple sub-cubing parameter}

To reduce symmetry and to explore structured subfamilies, we add \emph{convex layer} constraints (also called convex-layer decompositions) that enforce a nested sequence of convex layers with prescribed sizes.

\subsection{Convex-layer (hull template) anchoring}

Fix $n$ and a vector of layer sizes
\[
\mathbf{h} = (h_0,h_1,\dots,h_{r-1}),\qquad h_i\ge 3,\qquad H\coloneqq \sum_{i=0}^{r-1} h_i \le n.
\]
We interpret points $0,\dots,h_0-1$ as the vertices of an outer convex $h_0$-gon in cyclic order, points $h_0,\dots,h_0+h_1-1$ as the next layer, and so on. Remaining $n-H$ points are treated as unconstrained interior points.

We then add unit clauses enforcing:
\begin{itemize}
\item \textbf{Within-layer convexity:} every triple of vertices within a layer (in the chosen cyclic order) has fixed orientation.
\item \textbf{Nesting:} each deeper-layer point lies on the interior side of each oriented edge of an outer layer (encoded by fixed orientations of suitable triples).
\end{itemize}

\subsection{A simple sub-cubing parameter (anchoring consecutive layers)}\label{subsec:subcubing}

For some hard convex-layer templates we further add a small family of unit clauses controlled by a vector
\[
\mathbf{w}=(w_0,w_1,\dots,w_{r-1}),
\]
where each $w_i$ is interpreted as an offset within layer $i$ (with $w_0=0$ by convention). This parameter is used as a \emph{manual sub-cubing} mechanism: different choices of $\mathbf{w}$ produce independent subinstances.

\paragraph{Geometric meaning.}
Let $L_i$ denote layer $i$ and let $s_i=\sum_{j<i} h_j$ be its starting index, so $L_i=\{s_i,\dots,s_i+h_i-1\}$.
For each $i\ge 1$ we fix an \emph{anchor vertex} $a=s_{i-1}$ in the previous layer $L_{i-1}$, and we select two vertices of the current layer:
\[
b=s_i,\qquad c=s_i+w_i.
\]
We then add unit constraints on orientation literals that force all points of index $>s_i$ (i.e., all remaining vertices of $L_i$ and all deeper layers / leftover points) to lie inside the wedge at $a$ bounded by the rays $ab$ and $ac$.
Equivalently, from the viewpoint of $a$, the pair $(b,c)$ is forced to behave like two extremal (supporting) vertices of the inner layer, fixing part of the relative ``rotation'' between $L_{i-1}$ and $L_i$.
When $w_i=0$ (so $c=b$), this extra anchoring at layer $i$ is omitted.

\paragraph{Role in computation.}
The $\mathbf{w}$ constraints are not intended as a balanced or exhaustive splitting strategy; rather, they provide a small, geometry-informed way to partition some highly symmetric templates into subinstances that can behave very differently in runtime.

\section{Instance size for $(n,k)=(33,7)$}

\subsection{Variable counts}

\paragraph{Triple variables.}
There are $\binom{33}{3}=5456$ unordered triples, hence 5456 base orientation variables.

\paragraph{4-set selectors.}
There are $\binom{33}{4}=40920$ 4-sets and 14 selectors per 4-set, contributing $40920\times 14=572{,}880$ variables.

\paragraph{Total.}
Thus the base instance has
\[
5456 + 572{,}880 = 578{,}336
\]
Boolean variables.

\subsection{Clause counts}

The dominant clause blocks are:
\begin{itemize}
\item \textbf{Reduced 5-point constraints:} for $n=33$ this block contributes $9{,}493{,}440$ clauses in the recorded generator version.
\item \textbf{4-set consistency:} $71\binom{33}{4} = 2{,}905{,}320$ clauses.
\item \textbf{No convex 7-set constraints (via 4-sets):} $\binom{33}{7}=4{,}272{,}048$ clauses, each of length 280.
\end{itemize}
Therefore the base CNF (before hull constraints) has
\[
9{,}493{,}440 \;+\; 2{,}905{,}320 \;+\; 4{,}272{,}048
\;=\;
16{,}670{,}808
\]
clauses. Hull constraints add only unit clauses (hundreds to a few thousands), so they do not change CNF size materially, though they can strongly affect runtime.

\section{Results and observations}\label{sec:results}

This section summarizes a set of observed UNSAT runtimes for $(n,k)=(33,7)$ under convex-layer templates and (optionally) the $\mathbf{w}$ sub-cubing parameter. Times are single-thread wall-clock runtimes and should be interpreted qualitatively.

All CNF instances were generated using Python scripts (PySAT~\cite{pysat_library}) and solved using Kissat~\cite{kissat} (single-thread runs).

\begin{table}[h]
\centering
\begin{tabular}{@{}lll@{}}
\toprule
Layer sizes $\mathbf{h}$ (sum $H$) & sub-cube parameter $\mathbf{w}$ & observed time (s) \\ \midrule
$6,6,6,3,6,6$ ($H=33$) & (not recorded) & $\approx 1.73\times 10^{5}$ \\
$3^{11}$ ($H=33$) & (not recorded) & $\approx 6.05\times 10^{5}$ \\
$3,3,4,3,3,6,6,5$ ($H=33$) & $[0,1,1,1,1,1,4,4]$ & $2.26\times 10^{5}$ \\
\midrule
$5^6$ ($H=30$) & (baseline run) & $2.59\times 10^{5}$ \\
$5^6$ ($H=30$) & $[0,4,4,4,4,4]$ & $2.50\times 10^{3}$ \\
$5^6$ ($H=30$) & $[0,1,1,1,1,1]$ & $1.60\times 10^{4}$ \\
$5^6$ ($H=30$) & $[0,1,1,1,0,0]$ & $2.28\times 10^{6}$ \\
\midrule
$4^8$ ($H=32$) & $[0,3,3,3,3,3,3,3]$ & $8.76\times 10^{4}$ \\
$4^8$ ($H=32$) & $[0,1,2,3,1,2,3,1]$ & $3.19\times 10^{5}$ \\
$4,3,4,4,4,4,4,4$ ($H=31$) & $[0,1,3,2,1,2,1,1]$ & $2.10\times 10^{6}$ \\
$4,3,3,5,4,6,3,4$ ($H=32$) & $[0,2,2,4,1,1,1,1]$ & $3.15\times 10^{3}$ \\
\midrule
$3,5,3,5,3,3,5,5$ ($H=32$) & $[0,1,2,1,2,1,1,4]$ & $3.59\times 10^{4}$ \\
$3,4,3,4,4,3,4,4,3$ ($H=32$) & $[0,1,2,1,2,1,1,3,2]$ & $9.06\times 10^{5}$ \\
\bottomrule
\end{tabular}
\caption{Selected UNSAT runtimes for $(n,k)=(33,7)$ under convex-layer templates and the $\mathbf{w}$ parameter (single-thread wall-clock seconds).}
\label{tab:runtimes}
\end{table}

\begin{remark}[Runtime variability]
Even within closely related convex-layer families, runtimes vary widely. For example, within the $5^6$ template we observe sub-cubes ranging from $2.50\times 10^{3}$\,s to $2.28\times 10^{6}$\,s, and within $4$-dominated templates from $8.76\times 10^{4}$\,s to $2.10\times 10^{6}$\,s. This motivates both encoding refinements (to reduce clause/literal volume and strengthen propagation) and stronger geometric anchoring for the hardest symmetric configurations.
\end{remark}

\paragraph{Reproducibility.}
The SAT instance generator and scripts used to produce the results reported in this paper are publicly available.\footnote{\url{https://github.com/bogdan27182/esc-paper}}
The repository contains the implementation corresponding to the encoding described here, together with configurations for the reported convex-layer templates.

\section{Discussion and future work}

The encoding presented here makes it feasible to generate and solve large SAT instances arising from the $(33,7)$ case, at least for many structured subfamilies obtained by convex-layer anchoring. The main practical challenge is that solver time varies widely across nearby anchored families.

\subsection*{Future work}
We list two encoding directions that appear promising and will be tested experimentally in a future revision. 

\begin{enumerate}
\item \textbf{Reduce the granularity of the 4-set encoding.}
The current implementation distinguishes all realizable 4-point types using multiple selector variables per 4-set. However, the 7-set exclusion constraints ultimately depend only on the coarse distinction ``convex vs.\ non-convex'' for each 4-set. It may therefore be possible to replace the detailed 4-type bookkeeping by a coarser representation that captures only what is needed for the 7-set constraints, reducing auxiliary variables and 4-set clauses.

\item \textbf{Shorten the 7-set constraints using the coarser 4-set information.}
At present, each 7-set clause is a long disjunction expanded over many pattern-specific literals. If a coarser convex/non-convex indicator is available per 4-set, the corresponding 7-set constraints can be expressed with substantially shorter clauses. Besides reducing memory pressure, shorter clauses typically strengthen propagation because a 7-set becomes constrained once many of its 4-subsets are forced convex.
\end{enumerate}

A second (orthogonal) direction is stronger geometric anchoring on the outer convex layer to reduce symmetry while preserving completeness of the intended case split; this will be explored experimentally.

\section{Conclusion}

We presented a SAT encoding for the planar Erd\H{o}s--Szekeres problem at $(n,k)=(33,7)$ using triple-orientation variables, a reduced CC-style 5-point constraint family, and an exact 4-set criterion (Proposition~\ref{prop:4criterion}) to exclude convex 7-sets. The encoding supports additional convex-layer anchoring constraints and yields UNSAT certificates for several anchored families. Further progress appears to depend on both encoding refinements and stronger geometric anchoring for the hardest symmetric configurations.

\end{document}